\newtheorem{theorem}{Theorem}[section]
\newtheorem{proposition}[theorem]{Proposition}
\newtheorem{corollary}[theorem]{Corollary}
\theoremstyle{definition}
\newtheorem{example}[theorem]{Example}
\numberwithin{equation}{section}
\DeclareMathOperator{\Soc}{Soc}
\DeclareMathOperator{\cl}{cl}
\begin{document}

\title[Identities, Approximate Identities and TDZ]{ Identities, Approximate Identities and Topological Divisors of Zero in Banach Algebras  }
\author{F. Schulz \and R. Brits \and M. Hasse}
\address{Department of Pure and Applied Mathematics, University of Johannesburg, South Africa}
\email{francoiss@uj.ac.za, rbrits@uj.ac.za, Melanie.Hasse@standardbank.co.za}
\subjclass[2010]{43H15, 46H05, 46H10}
\keywords{approximate identity, socle, topological divisor of zero}

\begin{abstract}
In \cite{tdz} S. J. Bhatt and H. V. Dedania exposed certain classes of Banach algebras in which every element is a topological divisor of zero. We identify a new (large) class of Banach algebras with the aforementioned property, namely, the class of non-unital Banach algebras which admits either an approximate identity or approximate units. This also leads to improvements of results by R. J. Loy and J. Wichmann, respectively. If we observe that every single example that appears in \cite{tdz} belongs to the class identified in the current paper, and, moreover, that many of them are classical examples of Banach algebras with this property, then it is tempting to conjecture that the classes exposed in \cite{tdz} must be contained in the class that we have identified here. However, we show somewhat elusive counterexamples. Furthermore, we investigate the role completeness plays in the results and show, by giving a suitable example, that the assumptions are not superfluous. The ideas considered here also yields a pleasing characterization: The socle of a semisimple Banach algebra is infinite-dimensional if and only if every socle-element is a topological divisor of zero in the socle.     
\end{abstract}
	\parindent 0mm
	
	\maketitle

\section{Identities, Approximate Identities and TDZ}

An element $y$ in a normed algebra $\left(A, \|\cdot\|\right)$ is called a \emph{topological divisor of zero} (or \emph{TDZ}) if there exists a sequence $\left(x_{n}\right) \subseteq A$ such that $\left\|x_{n}\right\| = 1$ for all $n \in \mathbb{N}$ and either $yx_{n} \rightarrow 0$ or $x_{n}y \rightarrow 0$. Furthermore, if $yx_{n} \rightarrow 0$ then $y$ is called a \textit{left TDZ}, and similarly, if $x_{n}y \rightarrow 0$ then $y$ is a \textit{right TDZ}. If $y$ is a left and right TDZ (where the sequences need not coincide), then $y$ is a \textit{two-sided TDZ}. The collection of all topological divisors of zero in a normed algebra $A$ will be denoted by $Z(A)$.\\

In \cite{tdz} S. J. Bhatt and H. V. Dedania established the following result concerning complex Banach algebras in which every element is a TDZ:

\begin{theorem}\cite[Theorem 1]{tdz}\label{1.1}
	Every element of a complex Banach algebra $A$ is a TDZ, if at least one of the following holds:
	\begin{itemize}
		\item[\textnormal{(i)}]
		$A$ is infinite dimensional and admits an orthogonal basis.
		\item[\textnormal{(ii)}]
		$A$ is a non-unital uniform Banach algebra in which the Shilov boundary $\partial A$ coincides with the carrier space $\mathcal{M}(A)$.
		\item[\textnormal{(iii)}]
		$A$ is a non-unital hermitian Banach $^{\ast}$-algebra with continuous involution. 
	\end{itemize}
\end{theorem}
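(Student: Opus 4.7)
The plan is to treat the three cases separately, since each relies on a different structural hypothesis, and in each case to produce for an arbitrary $y \in A$ a sequence $(x_n) \subset A$ with $\|x_n\| = 1$ and $yx_n \to 0$ (the left-sided version being analogous).

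For part (i) the orthogonal basis $(e_n)$ does almost all the work directly. After normalizing so that $\|e_n\| = 1$ and writing $y = \sum_n c_n e_n$, orthogonality gives $y e_n = c_n e_n$ (up to a harmless rescaling absorbed into the basis), so $x_n := e_n$ has $\|x_n\| = 1$ and $\|y x_n\| = |c_n|$. Since $(e_n)$ is a normalized Schauder basis, the partial-sum differences $c_n e_n = S_n(y) - S_{n-1}(y)$ tend to zero, and hence $c_n \to 0$; infinite-dimensionality is exactly what guarantees that this index set is infinite so that a genuine sequence is produced.

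For part (ii) I would pass to the Gelfand transform, which in a uniform algebra is an isometric embedding $A \to C_0(\mathcal{M}(A))$; non-unitality forces $\mathcal{M}(A)$ to be locally compact but not compact. Given $y$ and $\varepsilon > 0$, choose a compact $K \subset \mathcal{M}(A)$ with $|\hat y| < \varepsilon$ off $K$. The hypothesis $\partial A = \mathcal{M}(A)$ says every point lies in the Shilov boundary, which one translates into the existence, for any $p \notin K$ and any neighbourhood $U$ of $p$, of a norm-one $x \in A$ whose Gelfand transform attains its supremum only on $\overline{U}$. Passing to powers $x^m$ makes $|\hat{x}^m|$ arbitrarily small on $K$ while keeping $\|x^m\| = 1$, so $\|y x^m\|_\infty$ can be made small both on and off $K$; a standard diagonal selection across shrinking $\varepsilon$ and exhausting compacta then produces the required sequence.

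For part (iii) the natural route is via the Pták function $|y|_\gamma := r(y^*y)^{1/2}$, which under the hermitian hypothesis (together with Shirali--Ford) is a genuine $C^*$-seminorm on $A$; quotienting by its kernel and completing gives a non-unital $C^*$-algebra $B$. In $B$ the claim is immediate: for any $b \in B$, the self-adjoint element $b^*b$ has $0 \in \sigma(b^*b)$, and continuous functional calculus produces norm-one bumps $x_n = f_n(b^*b)$ supported near $0$ with $\|bx_n\|^2 = \|b^*b \, x_n\| \to 0$. The main obstacle is the transfer back to $(A, \|\cdot\|)$: the Pták seminorm is in general neither faithful nor equivalent to $\|\cdot\|$, so a sequence that works in $B$ need not do so verbatim in $A$. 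To bypass this, one would try to build the bump elements directly in $A$ by applying an appropriate functional calculus to the self-adjoint $y^*y \in A$ (which exists because hermiticity ensures real spectrum and the involution is continuous), and to control $\|yx_n\|$ via the factorization $\|yx_n\|^2 \leq C\|x_n\| \, \|y^*y x_n\|$ supplied by the continuous involution. This last lifting step is the technical heart of the hermitian case.
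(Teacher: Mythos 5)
This is a quoted result: Theorem~\ref{1.1} is reproduced from Bhatt and Dedania \cite{tdz} as background, and the paper supplies no proof of it, so there is nothing in-text to compare your argument against; I can only judge it on its own terms. Your sketches for (i) and (ii) follow the standard lines and are sound modulo details you should make explicit. In (i), normalizing the basis destroys the idempotent relation $e_n^2=e_n$, so keep the original basis, set $x_n=e_n/\left\|e_n\right\|$, and use $\left\|e_n\right\|=\left\|e_n^2\right\|\leq\left\|e_n\right\|^2$, hence $\left\|e_n\right\|\geq 1$, to pass from $c_ne_n\rightarrow 0$ to $c_n\rightarrow 0$. In (ii), the assertion that non-unitality forces $\mathcal{M}(A)$ to be non-compact is exactly the Shilov idempotent theorem and needs a citation; the boundary property you want is $\sup_{\mathcal{M}(A)\setminus U}\left|\hat{x}\right|<\left\|\hat{x}\right\|_{\infty}=\left\|x\right\|$, after which the power trick works precisely because $\left\|x^m\right\|=\left\|x\right\|^m$ in a uniform algebra.

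Part (iii) contains a genuine gap, which you half-acknowledge. The inequality $\left\|yx\right\|^2\leq C\left\|x\right\|\,\left\|y^{\ast}yx\right\|$ is the $C^{\ast}$-identity $\left\|(yx)^{\ast}(yx)\right\|=\left\|yx\right\|^2$ in disguise; mere continuity of the involution gives nothing of the sort in a general hermitian Banach $^{\ast}$-algebra, and the Pt\'{a}k seminorm may be strictly smaller than $\left\|\cdot\right\|$ (indeed non-faithful), so the transfer from the enveloping $C^{\ast}$-algebra cannot be effected this way. A workable route avoids the enveloping algebra entirely: by the Shirali--Ford theorem, hermiticity gives $\sigma(y^{\ast}y)\subseteq[0,\infty)$; since $A$ is a non-unital ideal in $A_{\mathbf 1}$, the element $y^{\ast}y$ cannot be invertible in $A_{\mathbf 1}$, so $0\in\sigma(y^{\ast}y)$ and therefore $0\in\partial\sigma(y^{\ast}y)$; boundary points of the spectrum are topological divisors of zero, so there exist norm-one $u_n$ with $y^{\ast}yu_n\rightarrow 0$. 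If $yu_n\rightarrow 0$ you are done; otherwise, along a subsequence with $\left\|yu_n\right\|$ bounded below, normalize $v_n:=yu_n/\left\|yu_n\right\|$ to see that $y^{\ast}$ is a left TDZ, and apply the (bi-continuous) involution to conclude that $y$ is a right TDZ. A Proposition~\ref{unitization}-type argument handles the descent from $A_{\mathbf 1}$ to $A$.
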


We now expose another class of Banach algebras in which every element is a TDZ. Firstly, however, we need to recall the following definitions:\\

A net $(e_\lambda)_{\lambda\in\Lambda}$ in a normed space $A$ is called a \emph{left approximate identity} in $A$ if $\lim e_{\lambda}x=x$ for all $x\in A$. Similarly $(e_\lambda)_{\lambda\in\Lambda}$ is a \emph{right} or \emph{two-sided approximate identity} in $A$ if $\lim xe_{\lambda}=x$ for all $x\in A$ or $\lim e_{\lambda}x=x$ and $\lim xe_{\lambda}=x$ for all $x \in A$, respectively. An approximate identity $(e_\lambda)_{\lambda\in\Lambda}$ is said to be \emph{bounded} if there exists a positive constant $K$ such that $\|e_{\lambda}\|\leq K$ for each $\lambda\in\Lambda$.

\begin{theorem}\label{1.2}
	Let $A$ be a Banach algebra without a unit (resp. left unit, resp. right unit). Assume that $A$ has a two-sided (resp. left, resp. right) approximate identity $\left(e_{\lambda}\right)_{\lambda \in \Lambda}$ (which is not necessarily bounded). Then every element of $A$ is a two-sided (resp. right, resp. left) TDZ.
\end{theorem}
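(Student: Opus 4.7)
The plan is a proof by contradiction, built around converting the failure of the TDZ property into a manufacturing process for a genuine identity out of the given approximate identity. I will detail the two-sided case; the one-sided versions run along the same lines with the appropriate one-sided AI property in place.

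Fix $y \in A$ and assume $y$ is not a left TDZ. The negation of the defining condition is quantitative: the non-existence of a unit-norm sequence $(x_n)$ with $yx_n \to 0$ is equivalent to $\inf\{\|yx\| : \|x\|=1\} > 0$, which yields a constant $c > 0$ such that $\|yx\| \geq c\|x\|$ for every $x \in A$. I would then feed the approximate identity into this inequality, applying it to $x = e_\lambda - e_\mu$ to obtain
\[
\|e_\lambda - e_\mu\| \leq c^{-1}\|ye_\lambda - ye_\mu\|.
\]
Because $(e_\lambda)$ is a right approximate identity, $ye_\lambda \to y$, so the right-hand side is a Cauchy tail, forcing $(e_\lambda)$ itself to be a Cauchy net. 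Completeness of $A$ — applied in the net version, which reduces to the sequential version in a standard way — then produces $e \in A$ with $e_\lambda \to e$. Passing to the limit in $e_\lambda x \to x$ and $xe_\lambda \to x$ gives $ex = x = xe$ for every $x \in A$, so $e$ is a unit, contradicting the hypothesis. This shows $y$ is a left TDZ, and a symmetric argument using $e_\lambda y \to y$ shows it is also a right TDZ.

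For the one-sided versions the structure is identical: in the case of a left AI in an algebra with no left unit, I would negate the right-TDZ property for $y$ to get $\|xy\| \geq c\|x\|$, substitute $e_\lambda - e_\mu$, and combine with $e_\lambda y \to y$ to force $(e_\lambda)$ to be Cauchy; its limit $e$ then satisfies $ex = x$ for all $x$, producing a forbidden left unit. The right AI case is symmetric, yielding a right unit.

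The step I expect to require the most care is the transition between nets and sequences, since the TDZ definition is phrased sequentially while $(e_\lambda)$ is indexed by an arbitrary directed set. This is not a serious obstacle — once $\inf\{\|yx\| : \|x\|=1\}$ is shown to vanish, one picks $x_n$ with $\|x_n\|=1$ and $\|yx_n\| < 1/n$ directly — but it must not be glossed over. The other essential ingredient is completeness of $A$, used exactly once, to convert the Cauchy net $(e_\lambda)$ into a convergent one; this is precisely the hypothesis the authors later signal cannot be dropped.
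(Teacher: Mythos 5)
Your proof is correct and is essentially the paper's argument run in the contrapositive: the paper observes that, by completeness and the absence of a unit, $(e_\lambda)$ cannot be Cauchy, and uses the resulting differences $e_{\alpha}-e_{\beta}$ of norm $\geq\epsilon$ with $x\left(e_{\alpha}-e_{\beta}\right)\rightarrow 0$ as the witnessing sequence, whereas you assume a lower bound $\|yx\|\geq c\|x\|$ and deduce that $(e_\lambda)$ is Cauchy, hence convergent to a unit. Both versions rest on the same estimate $\|x(e_\lambda-e_\mu)\|\leq\|xe_\lambda-x\|+\|x-xe_\mu\|$ and invoke completeness and non-unitality at exactly the same points, so there is no gap.
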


\begin{proof}
	We shall restrict ourselves to proving the two-sided case, as the other cases are obtained similarly. Firstly note that $\left(e_{\lambda}\right)_{\lambda \in \Lambda}$ does not converge, for then $A$ will have a unit which contradicts our hypothesis on $A$. Consequently, there exists a fixed $\epsilon > 0$ such that the following holds true: For each $\lambda_{0} \in \Lambda$, there exist $\lambda_{1}, \lambda_{2} \in \Lambda$ such that $\lambda_{1} \geq \lambda_{0}$, $\lambda_{2} \geq \lambda_{0}$ and $\left\|e_{\lambda_{1}} - e_{\lambda_{2}}\right\| \geq \epsilon$. If this was not the case then we obtain the contradiction $e_{\lambda} \rightarrow e_{\lambda_{0}}$ for some $\lambda_{0} \in \Lambda$. Let $x \in A$ be arbitrary and denote by $B(x, \delta)$ the open ball centered at $x$ with radius $\delta > 0$. Since $\left(xe_{\lambda}\right)_{\lambda \in \Lambda}$ converges to $x$, for each $n \in \mathbb{N}$, we can find a $\lambda_{n} \in \Lambda$ such that $xe_{\lambda} \in B\left(x, 1/n\right)$ whenever $\lambda \geq \lambda_{n}$. Moreover, by the preceding paragraph, we can find $\alpha_{n}, \beta_{n} \in \Lambda$ such that $\alpha_{n} \geq \lambda_{n}$, $\beta_{n} \geq \lambda_{n}$ and $\left\|e_{\alpha_{n}}-e_{\beta_{n}}\right\| \geq \epsilon$. For each $n \in \mathbb{N}$, let $y_{n} := e_{\alpha_{n}}-e_{\beta_{n}}$. Then $xy_{n} \rightarrow 0$ as $n \rightarrow \infty$ and $\left(y_{n}\right)$ does not converge to $0$. This is sufficient to conclude that $x$ is a left TDZ. Similarly, it can be shown that $x$ is a right TDZ. This completes the proof.
\end{proof}

\begin{corollary}\label{jointtdz}
	Let $A$ be a Banach algebra without a unit (resp. left unit, resp. right unit). Assume that $A$ has a two-sided (resp. left, resp. right) approximate identity $\left(e_{\lambda}\right)_{\lambda \in \Lambda}$ (which is not necessarily bounded). Then there exists a net $\left(y_{\mu}\right)_{\mu \in \Lambda_{0}}$ in $A$ such that: 
	\begin{itemize}
		\item[\textnormal{(i)}]
		$y_{\mu}x \rightarrow 0$ and $xy_{\mu} \rightarrow 0$ for every $x \in A$ (resp. $y_{\mu}x \rightarrow 0$, resp. $xy_{\mu} \rightarrow 0$).
		\item[\textnormal{(ii)}]
		$y_{\mu} \not\rightarrow 0$.
	\end{itemize}
\end{corollary}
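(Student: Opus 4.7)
The plan is to upgrade Theorem \ref{1.2} from "for each $x$ there is a sequence" to "one net works for all $x$" by using a net indexed by pairs $(\alpha,\beta)\in\Lambda\times\Lambda$ rather than by $\mathbb{N}$. Let $\epsilon>0$ be the constant produced in the proof of Theorem \ref{1.2} (so that above every $\lambda_0\in\Lambda$ there are $\lambda_1,\lambda_2\geq\lambda_0$ with $\|e_{\lambda_1}-e_{\lambda_2}\|\geq\epsilon$). Set
\[
\Lambda_0 := \left\{(\alpha,\beta)\in\Lambda\times\Lambda : \|e_\alpha-e_\beta\|\geq\epsilon\right\},
\]
order it by $(\alpha,\beta)\leq(\alpha',\beta')$ iff $\alpha\leq\alpha'$ and $\beta\leq\beta'$, and define $y_{(\alpha,\beta)} := e_\alpha-e_\beta$. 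Property (ii) is then immediate since $\|y_\mu\|\geq\epsilon$ for every $\mu\in\Lambda_0$.

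The first step that requires a short verification is that $\Lambda_0$ is actually a directed set (in particular non-empty). Given any pairs $(\alpha_1,\beta_1),(\alpha_2,\beta_2)\in\Lambda_0$, pick $\lambda_0\in\Lambda$ dominating all four components, and apply the $\epsilon$-persistence from Theorem \ref{1.2} to produce $\alpha_3,\beta_3\geq\lambda_0$ with $\|e_{\alpha_3}-e_{\beta_3}\|\geq\epsilon$; the pair $(\alpha_3,\beta_3)$ lies in $\Lambda_0$ and dominates both starting pairs.

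Property (i) in the two-sided case then follows by a direct $\delta$-argument. Fix $x\in A$ and $\delta>0$; using both $xe_\lambda\to x$ and $e_\lambda x\to x$ choose $\lambda_x\in\Lambda$ so that $\|xe_\lambda-x\|<\delta/2$ and $\|e_\lambda x-x\|<\delta/2$ whenever $\lambda\geq\lambda_x$. By the directedness argument above (applied with $\lambda_0:=\lambda_x$) there is $(\alpha_0,\beta_0)\in\Lambda_0$ with $\alpha_0,\beta_0\geq\lambda_x$; for every $\mu=(\alpha,\beta)\geq(\alpha_0,\beta_0)$ the triangle inequality yields
\[
\|xy_\mu\|\leq\|xe_\alpha-x\|+\|x-xe_\beta\|<\delta,
\]
and symmetrically $\|y_\mu x\|<\delta$. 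The one-sided versions are obtained by invoking only the available half of the approximate identity.

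The only real obstacle is the bookkeeping around directedness of $\Lambda_0$; once it is checked that the $\epsilon$ from Theorem \ref{1.2} lives cofinally (which is precisely what that theorem's contradiction argument provides), the analytic content of the corollary is essentially a reformulation of the telescoping estimate $\|x(e_\alpha-e_\beta)\|\leq\|xe_\alpha-x\|+\|xe_\beta-x\|$. No new hypothesis on $A$ or on $(e_\lambda)$ beyond what Theorem \ref{1.2} already exploits is needed.
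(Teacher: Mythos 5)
Your proposal is correct and follows essentially the same route as the paper: the same index set $\Lambda_{0}$ of $\epsilon$-separated pairs ordered componentwise, the same cofinality argument for directedness, and the same telescoping triangle-inequality estimate for (i). No gaps.
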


\begin{proof}
	Suppose that the non-unital Banach algebra $A$ has a two-sided approximate identity $\left(e_{\lambda}\right)_{\lambda \in \Lambda}$ (the other cases follow similarly). Recall from the proof of Theorem \ref{1.2} that there exists an $\epsilon > 0$ such that for each $\lambda_{0} \in \Lambda$, there exist $\lambda_{1}, \lambda_{2} \in \Lambda$ such that $\lambda_{1} \geq \lambda_{0}$, $\lambda_{2} \geq \lambda_{0}$ and $\left\|e_{\lambda_{1}}-e_{\lambda_{2}}\right\| \geq \epsilon$. We now define
	$$\Lambda_{0} := \left\{\left(\alpha, \gamma\right) \in \Lambda \times \Lambda: \left\|e_{\alpha}-e_{\gamma}\right\| \geq \epsilon \right\}$$
	and for $\left(\alpha_{1}, \gamma_{1}\right), \left(\alpha_{2}, \gamma_{2}\right) \in \Lambda_{0}$,  $\left(\alpha_{1}, \gamma_{1}\right) \leq \left(\alpha_{2}, \gamma_{2}\right)$ if and only if $\alpha_{1} \leq \alpha_{2}$ and $\gamma_{1} \leq \gamma_{2}$ in $\Lambda$. We claim that $\left(\Lambda_{0}, \leq \right)$ is a directed set: Reflexivity and transitivity follows readily from the fact that $\Lambda$ is directed. So let $\left(\alpha_{1}, \gamma_{1}\right), \left(\alpha_{2}, \gamma_{2}\right) \in \Lambda_{0}$. Since $\Lambda$ is directed, there exists a $\lambda_{0}$ such that $\lambda_{0} \geq \alpha_{j}$ and $\lambda_{0} \geq \gamma_{j}$ for $j =1, 2$. Now, by hypothesis, there exists $\beta_{1}, \beta_{2} \in \Lambda$ such that $\beta_{1} \geq \lambda_{0}$, $\beta_{2} \geq \lambda_{0}$ and $\left\|e_{\beta_{1}}-e_{\beta_{2}}\right\| \geq \epsilon$. Hence, $\left(\beta_{1}, \beta_{2}\right) \in \Lambda_{0}$ and $\left(\alpha_{j}, \gamma_{j}\right) \leq \left(\beta_{1}, \beta_{2}\right)$ for $j = 1, 2$. This proves our claim. For each $\mu := \left(\alpha, \gamma\right) \in \Lambda_{0}$, let $y_{\mu} := e_{\alpha} - e_{\gamma}$. We now show that $\left(y_{\mu}\right)_{\mu \in \Lambda_{0}}$ is the desired net: Certainly (ii) is satisfied since $\left\|y_{\mu}\right\| \geq \epsilon$ for all $\mu \in \Lambda_{0}$. It therefore remains to verify (i). Let $x \in A$ be arbitrary, and recall that $\left(e_{\lambda}x\right)_{\lambda \in \Lambda}$ converges to $x$. Fix any $\delta > 0$ and consider the open ball $B\left(0, \delta\right)$. Since $e_{\lambda}x \rightarrow x$, there exists a $\lambda_{0} \in \Lambda$ such that $e_{\lambda}x \in B\left(x, \delta/2\right)$ whenever $\lambda \geq \lambda_{0}$. Moreover, there exists $\lambda_{1}, \lambda_{2} \in \Lambda$ such that $\lambda_{1} \geq \lambda_{0}$, $\lambda_{2} \geq \lambda_{0}$ and $\left\|e_{\lambda_{1}}-e_{\lambda_{2}}\right\| \geq \epsilon$. Thus, $\left(\lambda_{1}, \lambda_{2}\right) \in \Lambda_{0}$. Now, if $\left(\alpha, \gamma\right) \in \Lambda_{0}$ and $\left(\alpha, \gamma\right) \geq  \left(\lambda_{1}, \lambda_{2}\right)$, then $\alpha \geq \lambda_{1} \geq \lambda_{0}$ and $\gamma \geq \lambda_{2} \geq \lambda_{0}$. Consequently,
	\begin{eqnarray*}
		\left\|\left(e_{\alpha}-e_{\gamma}\right)x\right\| & = & \left\|e_{\alpha}x -x + x -e_{\gamma}x\right\| \\
		& \leq & \left\|e_{\alpha}x - x\right\| + \left\| e_{\gamma}x-x\right\| \\
		& < & \frac{\delta}{2} + \frac{\delta}{2} \;\, =\;\, \delta.
	\end{eqnarray*}
	This shows that $\left(\alpha, \gamma\right) \geq  \left(\lambda_{1}, \lambda_{2}\right)$ implies $\left(e_{\alpha}-e_{\gamma}\right)x \in B\left(0, \delta\right)$. Hence, we conclude that the net $\left(y_{\mu}x\right)_{\mu \in \Lambda_{0}}$ converges to $0$. Similarly, one can prove that $\left(xy_{\mu}\right)_{\mu \in \Lambda_{0}}$ converges to $0$ since $xe_{\lambda} \rightarrow x$. The result now follows.
\end{proof}

\textbf{Remark.} If a net in $A$ satisfies both properties (i) and (ii) in Corollary \ref{jointtdz}, then $A$ is said to consist entirely of joint topological divisors of zero (see \cite[p. 88]{nonremovableideals}).\\

Completeness plays a central role in the proof of Theorem \ref{1.2}. The next example emphasizes this. It exhibits a non-unital normed algebra which has a bounded two-sided approximate identity, but which has at least one element which is not a TDZ:

\begin{example}\label{1.3}
	\textnormal{Let $B(H)$ be the Banach algebra of bounded linear operators from a infinite dimensional complex Hilbert space $H$ into itself. Denote by $A$ the $C^{\ast}$-subalgebra of $B(H)$ generated by the identity operator $I_{H}$ and $K(H)$, the ideal of compact operators on $H$. Fix any non-algebraic operator $T \in K(H)$ (for instance a compact operator with an infinite spectrum). We now define $B$ to be the subalgebra of $A$ generated by the element $T+I_{H}$. Finally, if we denote by $F(H)$ the ideal of finite rank operators in $B(H)$, then by \cite[Proposition 3]{ultraprimeexa} $C:= F(H)+B$ is a dense non-unital subalgebra of the unital Banach algebra $A$. So there exits a non-unital normed algebra whose completion is unital. Let $\left(E_{n}\right)$ be the sequence in $C$ such that $E_{n} \rightarrow I_{H}$ as $n \rightarrow \infty$. Then $\left(E_{n}\right)$ is a bounded two-sided approximate identity in $C$. However, if every element in $C$ is a TDZ, then by the denseness of $C$ every element of $A$ is a TDZ. But this is absurd since $I_{H} \in A$. Hence, $C$ contains at least one element which is not a TDZ.}
\end{example}

The closure of a set $Y$ in a topological space $X$ is denoted by $\cl(Y)$. If $(A, \|\cdot\|)$ is a normed space then we shall also write $\cl(A)$ for the completion of $A$ under the norm $\|\cdot\|$. It is possible to deduce the following from Theorem \ref{1.2}:

\begin{corollary}\label{1.4}
	Let $A$ be a normed algebra and assume that $\cl(A)$ does not have a unit (resp. left unit, resp. right unit). If $A$ has a two-sided (resp. left, resp. right) approximate identity $\left(e_{\lambda}\right)_{\lambda \in \Lambda}$ (which is not necessarily bounded), then every element of $A$ is a two-sided (resp. right, resp. left) TDZ (in $A$).
\end{corollary}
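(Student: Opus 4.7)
The plan is to adapt the proof of Theorem \ref{1.2} by relocating the completeness argument from $A$ to $\cl(A)$, while keeping the construction of TDZ-witnesses inside $A$, where the approximate identity already lives.

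First I would show that $(e_\lambda)$ is not Cauchy in $(A, \|\cdot\|)$. If it were, it would converge in the Banach algebra $\cl(A)$ to some element $e$; for any $x \in A$, continuity of (the extended) multiplication on $\cl(A)$ gives $xe_\lambda \to xe$, while the approximate-identity property in $A$ gives $xe_\lambda \to x$, so $xe = x$ for every $x \in A$. By density of $A$ in $\cl(A)$ and continuity this extends to $ye = y$ for all $y \in \cl(A)$, and the symmetric use of $e_\lambda x \to x$ gives $ey = y$, so $e$ would be a unit of $\cl(A)$, contradicting the hypothesis. Consequently there is a fixed $\epsilon > 0$ with the large-gap property: for every $\lambda_0 \in \Lambda$ one can find $\lambda_1, \lambda_2 \geq \lambda_0$ such that $\|e_{\lambda_1} - e_{\lambda_2}\| \geq \epsilon$.

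With this $\epsilon$ in hand, the remainder of the proof of Theorem \ref{1.2} transfers verbatim and remains inside $A$: for $x \in A$ and each $n \in \mathbb{N}$, pick $\lambda_n \in \Lambda$ with $xe_\lambda \in B(x, 1/n)$ whenever $\lambda \geq \lambda_n$, then choose $\alpha_n, \beta_n \geq \lambda_n$ with $\|e_{\alpha_n} - e_{\beta_n}\| \geq \epsilon$, and set $y_n := e_{\alpha_n} - e_{\beta_n} \in A$. Then $xy_n \to 0$ while $\|y_n\| \geq \epsilon$, and normalizing produces a unit-norm sequence in $A$ witnessing that $x$ is a left TDZ in $A$; the symmetric argument using $e_\lambda x \to x$ yields the right TDZ conclusion. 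The one-sided versions of the corollary are handled identically with the corresponding one-sided approximate identity and the corresponding one-sided form of the unit in $\cl(A)$.

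The only genuine obstacle is the Cauchy step. One might hope to shortcut the proof by invoking Theorem \ref{1.2} directly on $\cl(A)$, but in general there is no reason for $(e_\lambda)$ to be an approximate identity on all of $\cl(A)$: the standard density argument requires $(e_\lambda)$ to be bounded, which is not assumed here. Running the Cauchy-versus-unit dichotomy inside $\cl(A)$ while drawing the TDZ-witnesses from $A$ sidesteps this issue cleanly.
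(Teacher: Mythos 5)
Your proof is correct and follows essentially the same route as the paper: the paper's own (very brief) argument likewise observes that convergence of $(e_\lambda)$ in $\cl(A)$ would produce a unit there, and then reruns the argument of Theorem \ref{1.2} with the TDZ-witnesses $e_{\alpha_n}-e_{\beta_n}$ taken in $A$. Your closing remark about why one cannot simply apply Theorem \ref{1.2} to $\cl(A)$ (the unbounded approximate identity need not extend by density) is a correct and worthwhile clarification, but it does not change the substance of the argument.
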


\begin{proof}
	If $\left(e_{\lambda}\right)_{\lambda \in \Lambda}$ converges in $\cl(A)$, then we obtain a contradiction with the hypothesis on $\cl(A)$. The argument in the proof of Theorem \ref{1.2} now readily establishes the result. 
\end{proof}

A similar result holds true for algebras with approximate units. A normed algebra $A$ is said to have \emph{left approximate units} if for every $x\in A$ and every $\epsilon>0$ there exists a $u\in A$ (depending on $x$ and $\epsilon$) such that  $\|x-ux\|<\epsilon$. Similarly, in the obvious natural way, we can define the notions of \emph{right} and \emph{two-sided approximate units} in $A$. Furthermore we note that $A$ is said to have, for instance, \emph{bounded left approximate units} if there exists a positive constant $K$ such that for every $x\in A$ and every $\epsilon>0$ there exists a $u\in A$ (depending on $x$ and $\epsilon$) such that 
$$\|u\|\leq K\;\,\mathrm{and}\;\, \|x-ux\|<\epsilon.$$
Finally we mention that $A$ has, for instance, \emph{pointwise-bounded left approximate units} if for every $x\in A$ and every $\epsilon>0$ there exists a $u\in A$ (depending on $x$ and $\epsilon$) and a positive constant $K(x)$ such that
$$\|u\|\leq K(x)\;\,\mathrm{and}\;\, \|x-ux\|<\epsilon.$$

\begin{theorem}\label{appunitstdz}
	Let $A$ be a normed algebra such that $\cl(A)$ is not unital. If $A$ has left (or right or two-sided) approximate units then every element of $\cl(A)$ is a TDZ. In particular, every element of a non-unital Banach algebra with left approximate units is a TDZ. 
\end{theorem}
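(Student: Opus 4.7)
The strategy is to prove the contrapositive: assume some $\tilde{x}\in\cl(A)$ is not a TDZ in $\cl(A)$ and derive that $\cl(A)$ must have a two-sided identity, contradicting the hypothesis. The assumption that $\tilde{x}$ is not a TDZ says that both $R_{\tilde{x}}$ (right multiplication) and $L_{\tilde{x}}$ (left multiplication) are bounded below on $\cl(A)$, so there exists $\delta>0$ with $\|y\tilde{x}\|\geq\delta\|y\|$ and $\|\tilde{x} y\|\geq\delta\|y\|$ for every $y\in\cl(A)$.

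I would first build an idempotent ``local identity'' for $\tilde{x}$ lying in $\cl(A)$. By density, pick $x_n\in A$ with $\|x_n-\tilde{x}\|<\min(1/n,\delta/2)$, and, using the left approximate units in $A$, pick $u_n\in A$ with $\|u_nx_n-x_n\|<1/n$. Applying $\|u_n\|\leq\|u_n\tilde{x}\|/\delta$ together with $\|u_n\tilde{x}\|\leq\|u_nx_n\|+\|u_n\|\|\tilde{x}-x_n\|$ and rearranging forces $\|u_n\|$ below a uniform constant depending only on $\tilde{x}$ and $\delta$; this coordination of the two approximations $x_n\to\tilde{x}$ and $u_nx_n\to x_n$ is the main technical subtlety when $\tilde{x}\notin A$. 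A triangle inequality then yields $u_n\tilde{x}\to\tilde{x}$, and applying bounded-below to $u_n-u_m$ shows $(u_n)$ is Cauchy; by completeness of $\cl(A)$ it converges to $u\in\cl(A)$ with $u\tilde{x}=\tilde{x}$, and $(u^2-u)\tilde{x}=0$ together with $R_{\tilde{x}}$ injective yields $u^2=u$.

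The algebraic heart is to promote $u$ from a one-sided local identity to a global two-sided identity of $\cl(A)$. From $(\tilde{x} u-\tilde{x})\tilde{x}=\tilde{x}(u\tilde{x})-\tilde{x}^2=0$ and $R_{\tilde{x}}$ bounded below, $\tilde{x} u=\tilde{x}$. Next, $L_u$ is injective: if $uv=0$ then $\tilde{x} uv=\tilde{x} v=0$, so $v=0$; dually, $R_u$ is injective. For any $\tilde{y}\in\cl(A)$ one has $u(\tilde{y}-u\tilde{y})=0$, forcing $u\tilde{y}=\tilde{y}$; dually $\tilde{y} u=\tilde{y}$. Thus $u$ is a two-sided unit of $\cl(A)$, contradicting the hypothesis.

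The variants for right and two-sided approximate units are symmetric: one starts from $L_{\tilde{x}}$ bounded below and builds $u\in\cl(A)$ with $\tilde{x} u=\tilde{x}$ first, then proceeds as above. The ``in particular'' clause is the special case $A=\cl(A)$.
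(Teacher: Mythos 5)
Your proof is correct and shares the paper's central mechanism: assuming some element is not a TDZ, the approximate units attached to it form a Cauchy sequence whose limit is an idempotent acting as a local identity, and this is then played off against the non-unitality of $\cl(A)$. The differences lie at the two ends of the argument. The paper runs the construction only for $a\in A$ (where an approximate unit for $a$ is available directly from the hypothesis, so no norm estimates are needed), derives each contradiction by exhibiting $a$ as a divisor of zero (via $(u^2-u)a=0$, $(au-a)a=0$, and $a(ux-x)=0$ or $(xu-x)a=0$ for a witness $x$ of non-unitality), and then passes from the dense set $A\subseteq Z(\cl(A))$ to all of $\cl(A)$ using the tacit fact that the set of topological divisors of zero is closed. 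You instead attack an arbitrary $\tilde x\in\cl(A)$ directly, which forces the extra step of coordinating the approximations $x_n\to\tilde x$ and $u_nx_n\to x_n$ and proving a uniform bound on $\|u_n\|$; your estimate $\delta\|u_n\|\le\|u_nx_n\|+\|u_n\|\,\|\tilde x-x_n\|$ with $\|\tilde x-x_n\|<\delta/2$ does yield $\|u_n\|\le 2(\|x_n\|+1/n)/\delta$, and this is the genuinely new technical content of your version. You also finish differently, promoting $u$ to an honest two-sided unit of $\cl(A)$ through the injectivity of $L_u$ and $R_u$ rather than through zero-divisor contradictions. What your route buys is that it dispenses with the final density-plus-closedness step; what it costs is the boundedness estimate. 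Both arguments are complete, and your treatment of the one-sided variants by symmetry is appropriate.
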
	
\begin{proof}
	Suppose first $a\in A$ is not a TDZ in $\cl(A)$. Since $A$ has left approximate units there exists a sequence $(u_n)$ in $A$ such that $\lim u_na=a$. If  $\left(u_n\right)$ is not Cauchy then we can find $\epsilon>0$, and  two subsequences of $(u_n)$, say $(u_{n_k})$ and $(u_{m_k})$, such that $\|u_{n_k}-u_{m_k}\|\geq\epsilon$ for all $k\in\mathbb N$. But then
	$$\lim_k \frac{(u_{n_k}-u_{m_k})a}{\|u_{n_k}-u_{m_k}\|}=0$$ shows that $a$ is a TDZ which contradicts the hypothesis. Thus $\left(u_n\right)$ must be Cauchy, with limit say $u\in \cl(A)$. From this it follows that $(u^2-u)a=0$ and hence, if $u$ is not an idempotent, $a$ is a divisor of zero in $\cl(A)$ and thus a TDZ which contradicts the assumption on $a$. So $u$ is an idempotent satisfying $ua=a$. Necessarily $u$ commutes with $a$ because otherwise, if $au-a\neq 0$, we get that
	$(au-a)a=0$ so that $a$ is a divisor of zero in $\cl(A)$ which again contradicts the assumption on $a$. Thus $ua=au=a$. But, by assumption, there must exist some $x\in\cl(A)$ such that either $ux-x\neq 0$ or $xu-x\neq 0$. If the first instance occurs we have
	$a(ux-x)=0$; and if the second case holds $(xu-x)a=0$. Again this gives a contradiction. So $\cl(A)$ contains a dense set of topological divisors of zero, and it follows that each element of $\cl(A)$ is a TDZ.    
\end{proof}

\textbf{Remark.} Example \ref{1.3} also shows that the assumption that $\cl(A)$ is not unital in Theorem \ref{appunitstdz} is not superfluous.\\

In \cite[Proposition 3]{idintensorproducts} R. J. Loy proves that if a Banach algebra $A$ does not consist entirely of right (left) topological divisors of zero and has a left (right) approximate identity, then it has a bounded left (right) approximate identity. In light of Theorem \ref{1.2} much more is true:

\begin{corollary}
	Let $A$ be a Banach algebra which does not consist entirely of right (resp. left) topological divisors of zero. If $A$ has a left (resp. right) approximate identity, then $A$ has a left (resp. right) unit. In particular, if moreover $A$ is commutative, then $A$ is unital.
\end{corollary}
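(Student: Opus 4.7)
The plan is to obtain this corollary directly as the contrapositive of Theorem \ref{1.2}, so no new Banach-algebraic machinery is needed; the work is essentially bookkeeping of the left/right conventions.

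First I would fix the left-sided statement (the right-sided version is symmetric). Assume $A$ has a left approximate identity $(e_\lambda)_{\lambda\in\Lambda}$ and, for contradiction, that $A$ does not have a left unit. Theorem \ref{1.2} applies in precisely this configuration (non-unital on the left + left approximate identity) and concludes that every element of $A$ is a \emph{right} TDZ. This directly contradicts the hypothesis that $A$ does not consist entirely of right topological divisors of zero. Hence $A$ must have a left unit $e$. The argument for the other case is identical after swapping ``left'' and ``right'' throughout.

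For the ``in particular'' clause, suppose additionally that $A$ is commutative. Say we are in the left-approximate-identity case, so the first part produces a left unit $e\in A$ with $ex=x$ for all $x\in A$. By commutativity $xe=ex=x$, so $e$ is a two-sided unit, i.e.\ $A$ is unital. The same reasoning works starting from a right unit in the other case.

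There is really no substantive obstacle here: the only thing that requires care is making sure the ``left/right'' sides line up correctly when invoking Theorem \ref{1.2}, since a \emph{left} approximate identity combined with the absence of a \emph{left} unit yields \emph{right} TDZs (and vice versa); once that matching is done, the contrapositive is immediate. The commutative corollary is then a one-line observation.
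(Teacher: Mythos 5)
Your proof is correct and is exactly the argument the paper intends: the corollary is stated as an immediate consequence (contrapositive) of Theorem \ref{1.2}, and your careful matching of the left/right conventions (left approximate identity plus absence of a left unit yielding right TDZs) agrees with the theorem as stated. The commutative observation is likewise the one-line argument the authors have in mind.
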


Loy remarks further, in his paper, that the converse of \cite[Proposition 3]{idintensorproducts} is not true. In particular he gives an example of a Banach algebra and then observes the following: ``..., so that all the elements are topological divisors of zero, \textit{but} has a (countable) bounded approximate identity''. We now know that every element is a TDZ (in his example) \textit{because} it has an approximate identity. In a similar vein J. Wichmann states and proves in \cite[Theorem 2]{boundappunits} that a commutative normed algebra $A$ which does not consist entirely of topological divisors of zero has pointwise-bounded approximate units if and only if $A$ has a bounded approximate identity. So suppose $A$ has at least one element, say $a$, which is not a TDZ in $A$. Then $a$ is not a TDZ in the completion of $A$ either. Now if $A$ has a bounded approximate identity $\left(e_{\lambda}\right)$ in $A$, then it is easy to see that $\left(e_{\lambda}\right)$ is also a bounded approximate identity for $\cl(A)$. So again, by Theorem \ref{1.2}, every element of $\cl(A)$ must be a TDZ. But this contradicts the fact that $a$ is not a TDZ in $\cl(A)$.  Hence, under the conditions above, it must in fact be the case that $\cl(A)$ is unital. This then is an improvement of Wichmann's result:

\begin{corollary}
	Let $A$ be a commutative normed algebra which does not consist entirely of topological divisors of zero. If $A$ has approximate units or an approximate identity, then $\cl(A)$ is unital.
\end{corollary}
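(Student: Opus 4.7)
The plan is a contradiction argument mirroring the one sketched in the paragraph just before the statement. Assume that $\cl(A)$ is not unital and, by hypothesis on $A$, pick $a \in A$ that is not a TDZ in $A$. The goal is to derive a contradiction by invoking Theorem \ref{1.2} (or Corollary \ref{1.4}) in the approximate identity case and Theorem \ref{appunitstdz} in the approximate units case.

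First I would record a small transfer step: the element $a$ is also not a TDZ in $\cl(A)$. If some sequence $(x_n) \subseteq \cl(A)$ witnessed $a$ as a TDZ there, density of $A$ in $\cl(A)$ would allow the choice of $y_n \in A$ with $\|y_n - x_n\| < 1/n$; then $\|y_n\| \to 1$ and $a y_n = a x_n + a(y_n - x_n) \to 0$, so renormalizing by $\|y_n\|$ would exhibit $a$ as a TDZ inside $A$, contradicting the choice of $a$. The symmetric argument handles the other-sided condition, so $a$ is not a TDZ in $\cl(A)$.

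Now split cases. If $A$ has an approximate identity $(e_\lambda)$, commutativity lets us treat it as two-sided, and by continuity of multiplication the same net is an approximate identity for the Banach algebra $\cl(A)$. Since $\cl(A)$ is non-unital, Theorem \ref{1.2} forces every element of $\cl(A)$ to be a TDZ, contradicting the transfer step. If instead $A$ has approximate units, Theorem \ref{appunitstdz} applies to $A$ directly and yields the same conclusion, again contradicting the transfer step. In either case $\cl(A)$ must be unital.

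The only portion that needs genuine care is the density/transfer observation, which the preceding discussion glosses over; everything else is a packaging of the earlier results. Commutativity enters only to collapse the one-sided/two-sided distinctions, so that the single assumption ``$\cl(A)$ is not unital'' is precisely what is needed to trigger the appropriate earlier theorem in each of the two alternatives.
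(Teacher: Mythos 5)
Your overall architecture is the paper's: assume $\cl(A)$ is not unital, take $a\in A$ that is not a TDZ, transfer this to $\cl(A)$ by the density argument, and contradict Theorem \ref{1.2} / Theorem \ref{appunitstdz}. The transfer step you spell out is correct and is exactly the observation the paper leaves implicit, and the approximate-units branch via Theorem \ref{appunitstdz} is fine.

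There is, however, one step in the approximate-identity branch that fails as written. You claim that ``by continuity of multiplication the same net is an approximate identity for the Banach algebra $\cl(A)$.'' This is only true for a \emph{bounded} approximate identity: for $x\in\cl(A)$ and $y\in A$ with $\|x-y\|<\varepsilon$ one gets $\|e_\lambda x - x\|\leq\|e_\lambda\|\,\varepsilon+\|e_\lambda y-y\|+\varepsilon$, and without a uniform bound on $\|e_\lambda\|$ the first term cannot be controlled. The corollary makes no boundedness assumption (and the paper is careful to insert the word ``bounded'' when it performs this extension in the discussion of Wichmann's theorem), so your argument as executed only covers the bounded case. The repair is the alternative you mention parenthetically but do not use: apply Corollary \ref{1.4} directly to the normed algebra $A$ --- it assumes only that $\cl(A)$ is non-unital and that $A$ has a (possibly unbounded) approximate identity, and it concludes that every element of $A$ is a TDZ \emph{in $A$}, which contradicts the hypothesis immediately and even makes the transfer step unnecessary in that branch.
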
   

It is easy to see that the class of Banach algebras identified in Theorem \ref{1.2} contains those Banach algebras mentioned in (i) of Theorem \ref{1.1}. Indeed, let $\left(e_{n}\right)$ be the orthogonal basis of $A$. For each positive integer $n$, set $k_{n} := \sum_{i=1}^{n} e_{i}$. Then, since each $x \in A$ can be expressed as $x = \sum_{m =1}^{\infty} \alpha_{m}e_{m}$, where the $\alpha_{m}$'s are scalars, and since $e_{m}e_{n} = \delta_{mn}e_{n}$, $\delta_{mn}$ being the Kronecker delta, it readily follows that
$$\lim_{n \rightarrow \infty} xk_{n} = \lim_{n \rightarrow \infty} k_{n}x = x$$
for all $x \in A$. So, in this case, $A$ has a two-sided approximate identity. In particular, S. J. Bhatt and V. Dedania pointed out in \cite[Example 3.1]{tdz} that the following algebras have orthogonal bases:
\begin{itemize}
	\item[(i)]
	For the unit circle $T$, the Banach convolution algebra (Lebesgue space) $L^{p}(T)$, $1 < p < \infty$.
	\item[(ii)]
	The Banach sequence algebras $c_{0}$, $\ell^{p}$  ($1\leq p < \infty$), with pointwise multiplication.
	\item[(iii)]
	The Hardy spaces $H^{p}(U)$ ($1 < p < \infty$) on the open unit disk $U$. 
\end{itemize}   
However, it is significantly more difficult to decide about the containment for the latter two classes of Banach algebras in Theorem \ref{1.1}. To emphasize this, we revisit some further examples which appear in \cite{tdz}:
\begin{itemize}
	\item[(1)]
	For a locally compact nondiscrete abelian group $G$, the convolution algebra $L^{1}(G)$ is an example of a non-unital hermitian Banach $^{\ast}$-algebra with continuous involution. Moreover, $L^{1}(G)$ has an approximate identity (see for instance \cite[p. 321]{rickart1960general}).
	\item[(2)]
	The subalgebras $C(T)$ (continuous functions) and $C^{m}(T)$ ($C^{m}$-functions) of $L^{1}(T)$ with respective norms
	$$\left\|f\right\|_{\infty} = \sup_{t \in T} \left|f(t)\right| \;\,\mathrm{and}\;\,\left\|f\right\|_{m} = \sup_{t \in T} \sum_{j=0}^{m} \frac{\left|f^{(j)}(t)\right|}{j!},$$
	are examples of non-unital hermitian Banach $^{\ast}$-algebras with continuous involution. Moreover, these algebras are homogeneous on $T$. Consequently, it follows from \cite[Theorem 2.11]{katznelson2004introduction} that Fej\'{e}r's kernel is an approximate identity in both algebras.
\end{itemize}
The authors of \cite{tdz} did not explicitly give an example of a Banach algebra satisfying condition (ii) in Theorem \ref{1.1}. However, by the Gelfand-Naimark Theorem it can straightforwardly be established that every non-unital commutative $C^{\ast}$-algebra $A$ satisfies $\partial A = \mathcal{M}(A)$. Moreover, it is well known that every non-unital commutative $C^{\ast}$-algebra contains bounded approximate units and hence a bounded approximate identity (see \cite[Lemma 2.10.1]{arveson2002short} and \cite[Proposition 2.9.14(ii)]{dales2000banach}, respectively).\\

In spite of the above, it turns out that each of the classes (ii) and (iii) in Theorem \ref{1.1} contains a Banach algebra without an approximate identity. We discuss these examples below:

\begin{example}\label{1.5}
	\textnormal{Let $D: = \left\{z \in \mathbb{C}: \left|z\right|<1 \right\}$, $\cl(D)$ be the closure of $D$ in $\mathbb{C}$ and let $I:= \left[0, 1\right]$. Take $B$ to be the so-called ``tomato can algebra''; that is, $B$ is the uniform algebra of all continuous complex-valued functions $f$ on $K:= \cl(D)\times I$ such that the function $z \mapsto f\left(z, 1\right)$ from $\cl(D)$ into $\mathbb{C}$ is analytic on $D$ (and continuous on $\cl(D)$). H. G. Dales and A. \"{U}lger have observed in \cite[Example 4.8(ii)]{approxident} that $B$ is natural, that is, $K$ can be identified with $\mathcal{M}(B)$ via the mapping $x \mapsto \chi_{x}$, where $\chi_{x}$ is the evaluation functional at $x$. Moreover, they showed that the closed ideal
		$$A := \left\{f \in B: f\left(0, 1\right) = 0 \right\}$$
		of $B$ does not have an approximate identity. We now prove that $\partial A = \mathcal{M}(A)$. Recall that the hull of $A$ viewed as an ideal of $B$ is given by
		$$\mathcal{H}(A) := \left\{\chi \in \mathcal{M}(B): \chi\left(A\right) = \left\{0\right\} \right\}.$$
		Certainly, $A$ contains the function 
		$$\left(z, \alpha\right) \mapsto z \;\,\left(\left(z, \alpha\right) \in K\right).$$
		Moreover, since $K$ is a compact Hausdorff space, it follows that $K$ is normal. Hence, by Urysohn's Lemma, for each $x \in K - \left(\cl(D) \times \left\{1\right\}\right)$, there exists an $f \in A$ such that $f\left(\cl(D) \times \left\{1\right\}\right) = \left\{0 \right\}$ and $f(x) = 1$. Consequently, since all the characters in $\mathcal{M}(B)$ are evaluation functionals, it readily follows that $\mathcal{H}(A)= \left\{\chi_{(0, 1)} \right\}$. Now, by \cite[Proposition 4.1.11]{dales2000banach} we may infer that the mapping $\chi \mapsto \left.\chi\right|_{A}$ from $\mathcal{M}(B)-\mathcal{H}(A)$ into $\mathcal{M}(A)$ is a homeomorphism. So $\mathcal{M}(A)$ consists of evaluation functionals at the points in $K - \left\{\left(0, 1\right)\right\}$. By the definition of a weak$^{\ast}$-open set in $\mathcal{M}(A)$, it is easy to see that if $x_{n} \rightarrow x$ as $n \rightarrow \infty$ in $K - \left\{\left(0, 1\right)\right\}$, then  $\chi_{x_{n}} \rightarrow \chi_{x}$ as $n \rightarrow \infty$ in $\mathcal{M}(A)$. Thus, in order to prove that $\partial A = \mathcal{M}(A)$, it will suffice to show that $\chi_{y} \in \partial A$ for each $y \in K - \left(\cl(D) \times \left\{1\right\}\right)$ (since $\partial A$ is closed in $\mathcal{M}(A)$). Let $y \in K - \left(\cl(D) \times \left\{1\right\}\right)$ be arbitrary. As above we choose $f$ such that $f\left(\cl(D) \times \left\{1\right\}\right) = \left\{0 \right\}$ and $f(y) = 1$. Observe that $K$ is metrizable and denote by $d$ its metric. Next we consider the continuous function $g: K \rightarrow \mathbb{C}$ defined by
		$$g(x) = \frac{1}{1+d(x, y)} \;\,\left(x \in K\right).$$
		If we let $h(x) = f(x)g(x)$ for each $x \in K$, then $h \in A$. Moreover, $\left|h(x)\right| < 1$ whenever $x \in K-\left\{y \right\}$ and $h(y) = 1$. Hence, $\left|\chi (h)\right|< 1$ for all $\chi \in \mathcal{M}(A)-\left\{\chi_{y} \right\}$ and $\left|\chi_{y} (h)\right| = 1$. It therefore follows that $\chi_{y} \in \partial A$ for each $y \in K - \left(\cl(D) \times \left\{1\right\}\right)$, and so, $\partial A = \mathcal{M}(A)$ as advertised. This example shows that the hypotheses in part (ii) of Theorem \ref{1.1} need not imply that $A$ has an approximate identity.  
	}
\end{example}

\begin{example}\label{1.6}
	\textnormal{Denote by $c_{0}$ the Banach algebra of all sequences of complex numbers which converge to $0$, equipped with the norm
		$$\left\|\alpha\right\|_{\infty} = \sup_{k \in \mathbb{N}} \left|\alpha_{k}\right| \;\,\left( \alpha = \left(\alpha_{k} \right) \in c_{0}\right).$$
		For $\alpha = \left(\alpha_{k} \right) \in c_{0}$, set
		$$p_{n} \left(\alpha\right) := \frac{1}{n}\sum_{k=1}^{n} k\left|\alpha_{k+1}-\alpha_{k}\right| \;\,\left(n \in \mathbb{N}\right).$$
		Define $A:= \left\{\alpha \in c_{0}: \sup_{n \in \mathbb{N}}p_{n}\left(\alpha\right) < \infty \right\}$ and 
		$$\left\|\alpha\right\| := \left\|\alpha\right\|_{\infty} + p\left(\alpha\right)\;\,\left(\alpha \in A\right),$$
		where $p\left(\alpha\right) := \sup_{n \in \mathbb{N}}p_{n}\left(\alpha\right)$. It can then be verified that $\left(A, \left\|\cdot\right\|\right)$ is a non-unital complex Banach algebra. In fact, since termwise complex conjugation defines an involution on $A$ and $\left\|\alpha\right\| = \left\|\alpha^{\ast}\right\|$ for all $\alpha \in A$, it readily follows that $A$ is a non-unital hermitian Banach $^{\ast}$-algebra with continuous involution. This example is due to J. Feinstein and is discussed in more rigorous detail in \cite[Example 4.1.46]{dales2000banach}. In particular, it is shown there that
		$$A^{2}:= \left\{\alpha\beta : \alpha \in A, \beta \in A \right\}$$
		is separable, but that $A$ is non-separable. Hence, $A$ does not have an approximate identity. So this example shows that the hypotheses in part (iii) of Theorem \ref{1.1} need not imply that $A$ has an approximate identity.}
\end{example}

We now proceed to show that there is a large class of normed algebras each containing a two-sided approximate identity, but whose completions are non-unital. The following results will be useful in this regard:

\begin{theorem}\label{1.7}
	Let $A$ be a complex semisimple Banach algebra with a unit, and suppose that the socle of $A$, denoted $\Soc(A)$, is nonzero. Then $\Soc(A)$ has a two-sided approximate identity.
\end{theorem}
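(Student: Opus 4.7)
I propose to construct the required approximate identity via a net of idempotents indexed by finite subsets of $\Soc(A)$, directed by inclusion. The task is to show: for every finite $F\subseteq\Soc(A)$ there is an idempotent $p_F\in\Soc(A)$ such that $p_Fx=xp_F=x$ for every $x\in F$. Granting this, the net $(p_F)$ is evidently a two-sided approximate identity in $\Soc(A)$; in fact the convergence is eventually exact, for once $F\supseteq\{x\}$ we have $p_Fx=x=xp_F$.

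The construction of $p_F$ rests on the standard structure theory of the socle of a complex semisimple Banach algebra. Every $x\in\Soc(A)$ is \emph{von Neumann regular}: there exists $a\in A$ with $x=xax$ (a consequence of the description $\Soc(A)=\sum Ae$ over minimal idempotents $e$, together with the fact that $eAe=\mathbb{C}e$ for each such $e$, since $A$ is complex). Consequently the elements $e_x:=ax$ and $f_x:=xa$ are idempotents in $\Soc(A)$ satisfying $xe_x=x$ and $f_xx=x$. It therefore suffices to produce, for the finite family of idempotents $\{e_{x_i},f_{x_i}:x_i\in F\}\subset\Soc(A)$, a single idempotent $p_F\in\Soc(A)$ which is a common upper bound under the partial order $e\leq p\iff ep=pe=e$. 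For then
\[
x_ip_F=(x_ie_{x_i})p_F=x_i(e_{x_i}p_F)=x_ie_{x_i}=x_i,\qquad p_Fx_i=p_F(f_{x_i}x_i)=(p_Ff_{x_i})x_i=f_{x_i}x_i=x_i,
\]
for each $x_i\in F$, as required.

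The main obstacle is the common-upper-bound claim for idempotents in the socle, which I would prove via the classical orthogonalization procedure for minimal idempotents. Since $A$ is semisimple, the left ideal generated by the given finite collection of socle idempotents is a finite sum of minimal left ideals of the form $Ae$; by dropping redundant summands one obtains an internal direct sum, and by successively modifying the corresponding minimal idempotents one arrives at an orthogonal system of minimal idempotents whose sum $p$ is an idempotent with $Ap$ containing the original family (so $p$ dominates each $e_{x_i}$ and $f_{x_i}$ from the right). Performing the analogous construction on the right-ideal side and combining through a finite iteration of these left-and-right steps (each of which only enlarges a finite collection of minimal idempotents) produces the desired two-sided dominating idempotent $p_F\in\Soc(A)$. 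This orthogonalization is classical for semisimple Artinian rings and transfers to the socle of a semisimple Banach algebra without essential change; once it is in place the rest of the argument is organisational.
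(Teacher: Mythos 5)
Your overall architecture is the same as the paper's: produce, for each finite subset $F$ of $\Soc(A)$, a \emph{local unit} $p_F\in\Soc(A)$ with $p_Fx=xp_F=x$ for $x\in F$, and then observe that the net $(p_F)$, indexed by finite subsets under inclusion, is (trivially, even exactly) a two-sided approximate identity. The reduction via von Neumann regularity of socle elements to the idempotents $e_x=ax$, $f_x=xa$ is also fine and classical. The gap is in the one step that carries all the weight: the existence of a single idempotent $p_F\in\Soc(A)$ dominating the finite family $\{e_{x_i},f_{x_i}\}$ on \emph{both} sides. The one-sided version is indeed the classical orthogonalization: $\sum_i Ae_{x_i}$ is a finite direct sum of minimal left ideals and hence equals $Aq$ for an idempotent $q$, giving $e_{x_i}q=e_{x_i}$; symmetrically one gets $r$ with $rf_{x_i}=f_{x_i}$. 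But your proposed passage from these two one-sided dominations to a single two-sided dominating idempotent --- ``combining through a finite iteration of these left-and-right steps'' --- is not an argument. Each pass adjoins the newly constructed idempotent to the family and then must dominate the enlarged family on the other side, which produces yet another new idempotent; the associated chain of left (resp.\ right) ideals $Aq_1\subseteq Aq_2\subseteq\cdots$ can grow in rank at every stage, and you give no invariant forcing the process to terminate. Since this two-sided domination lemma is essentially equivalent to the structural fact the whole theorem rests on, it cannot be waved through.

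The paper avoids the issue by quoting a structure theorem: any finite subset $x_1,\dots,x_n$ of $\Soc(A)$ is contained in a subalgebra $B\subseteq\Soc(A)$ with $B\cong M_{n_1}(\mathbb{C})\oplus\cdots\oplus M_{n_k}(\mathbb{C})$, and the unit $e$ of $B$ is exactly the desired local unit ($x_je=ex_j=x_j$); the passage from local units to an approximate identity is then also quoted. If you want a self-contained proof along your lines, you need an honest proof of the two-sided lemma --- for instance via the finite-dimensionality of the spaces $eAf$ for socle idempotents $e,f$, or by first establishing the matrix-unit decomposition of the subalgebra generated by finitely many rank-one elements --- rather than the iteration as sketched.
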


\begin{proof}
	Let $x_{1}, \ldots, x_{n} \in \Soc(A)$. By \cite[Theorem 3.13]{commutatorsinsocle} there exists a subalgebra $B$ of $\Soc(A)$ such that
	$$x_{1}, \ldots, x_{n} \in B \cong M_{n_{1}}\left(\mathbb{C}\right) \oplus \cdots \oplus M_{n_{k}}\left(\mathbb{C}\right),$$
	where the operations in the latter algebra are all pointwise. Let $e$ be the unit of $B$. Then $x_{j}e=ex_{j}=x_{j}$ for each $j \in \left\{1, \ldots, n \right\}$. By the remarks in \cite[\S 1]{unbapproxid} this is sufficient to infer the existence of a two-sided approximate identity in $\Soc(A)$. 
\end{proof}

Let $A$ be a complex semisimple Banach algebra with a unit. By \cite[Theorem 2.2]{tracesocleident} it follows that $\Soc(A)$ is finite-dimensional if and only if it is closed in $A$. Moreover, by the remark after Theorem 2.2 in \cite{tracesocleident} it follows that if $\Soc(A)$ is finite-dimensional, then $\Soc(A)$ has the Wedderburn-Artin structure; that is, $\Soc(A)$ is isomorphic as an algebra to $M_{n_{1}}\left(\mathbb{C}\right) \oplus \cdots \oplus M_{n_{k}}\left(\mathbb{C}\right)$ (where the operations in the latter algebra are all pointwise).   

\begin{proposition}\label{1.8}
	Let $A$ be a complex semisimple Banach algebra with a unit. Then $\Soc(A)$ is finite-dimensional if and only if the closure of $\Soc(A)$ has an identity element.
\end{proposition}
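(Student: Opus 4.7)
The plan is to handle the two directions separately, using the structural facts about the socle recorded just above the statement and a short Neumann-series argument for the nontrivial implication.

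For the forward direction, assume that $\Soc(A)$ is finite-dimensional. Then by the remark after \cite[Theorem 2.2]{tracesocleident} cited in the paragraph preceding the proposition, $\Soc(A)$ is closed in $A$ and enjoys the Wedderburn--Artin decomposition $\Soc(A)\cong M_{n_{1}}(\mathbb{C})\oplus\cdots\oplus M_{n_{k}}(\mathbb{C})$. Consequently $\cl(\Soc(A))=\Soc(A)$ is already unital, with identity the one coming from the block decomposition.

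For the reverse direction, suppose that $e$ is an identity for $\cl(\Soc(A))$. The strategy is to use density to manufacture an \emph{invertible} element of $\cl(\Soc(A))$ that already lies in $\Soc(A)$, and then to exploit the ideal property of the socle. Concretely, I would pick $u\in\Soc(A)$ with $\|e-u\|<1$; writing $u=e-(e-u)$ inside the unital Banach algebra $\cl(\Soc(A))$, the usual Neumann series $\sum_{n=0}^{\infty}(e-u)^{n}$ converges in $\cl(\Soc(A))$ and provides an inverse of $u$ there. Hence $u\cdot\cl(\Soc(A))=\cl(\Soc(A))$.

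The punchline is then immediate: $\Soc(A)$ is a two-sided ideal of $A$ and $\cl(\Soc(A))\subseteq A$, so $u\in\Soc(A)$ forces $u\cdot\cl(\Soc(A))\subseteq\Soc(A)$. Comparing with the previous identity yields $\cl(\Soc(A))\subseteq\Soc(A)$, so the socle is closed in $A$, and \cite[Theorem 2.2]{tracesocleident} then forces $\Soc(A)$ to be finite-dimensional. The only step that I expect to require genuine thought is the choice of the socle element $u$ together with the Neumann-series trick that promotes it to an invertible element of $\cl(\Soc(A))$; once $u^{-1}\in\cl(\Soc(A))$ is in hand, the ideal property of the socle closes the argument at once, and Theorem \ref{1.7} plays no role in the proof.
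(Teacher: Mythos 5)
Your proof is correct and follows essentially the same route as the paper: both arguments use density to produce an element of $\Soc(A)$ that is invertible in $\cl(\Soc(A))$ (you via an explicit Neumann series, the paper via openness of the group of invertibles) and then invoke the ideal property of the socle to conclude $\cl(\Soc(A))\subseteq\Soc(A)$. Your version is in fact slightly leaner, since it bypasses the paper's detour through $B=eAe$ and the semisimplicity of $B$, neither of which is needed for the conclusion.
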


\begin{proof}
	By the paragraph preceding the proposition, the forward implication directly follows, and the reverse implication will be established if we can prove that if the closure of $\Soc(A)$ has an identity element, then $\Soc(A)$ is closed. To this end, denote by $B$ the closure of $\Soc(A)$ in $A$ and let $e$ be the identity element of $B$. Then, since $\Soc(A)$ is an ideal, $B = eAe$. Hence, by \cite[Lemma 2.5]{spectrumpreservers} it readily follows that $B$ is a semisimple Banach algebra with identity $e$. But $\Soc(A)$ is dense in $B$. Hence, there exists a sequence $\left(e_{n}\right) \subseteq \Soc(A)$ such that $e_{n} \rightarrow e$ as $n \rightarrow \infty$. So, since $e$ is the identity of $B$, it follows that $e_{n}$ must be invertible in $B$ for all $n$ sufficiently large. Thus, since $\Soc(A)$ is an ideal, it follows that $e \in \Soc(A)$ and consequently we have $B \subseteq \Soc(A)$. So $\Soc(A)$ is closed which establishes the result.
\end{proof}

\begin{theorem}\label{1.9}
	Let $A$ be a complex semisimple Banach algebra with a unit. Then $\Soc(A)$ is infinite-dimensional if and only if every element of $\Soc(A)$ is a TDZ in $\Soc(A)$.
\end{theorem}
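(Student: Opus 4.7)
The plan is to reduce both directions to results already available in this section. For the forward direction, suppose $\Soc(A)$ is infinite-dimensional. Theorem \ref{1.7} supplies a two-sided approximate identity $(e_{\lambda})_{\lambda\in\Lambda}$ in $\Soc(A)$; inspection of its proof shows that each $e_{\lambda}$ is taken to be the unit of a finite-dimensional subalgebra $B$ of $\Soc(A)$, so the net really lives in $\Soc(A)$, not merely in $A$. By Proposition \ref{1.8}, the assumption that $\Soc(A)$ is infinite-dimensional is equivalent to saying that $\cl(\Soc(A))$ has no identity element. These are exactly the hypotheses of Corollary \ref{1.4} applied to the normed algebra $\Soc(A)$, which then yields at once that every element of $\Soc(A)$ is a two-sided TDZ in $\Soc(A)$.

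For the converse, I argue the contrapositive. If $\Soc(A)$ is finite-dimensional then the Wedderburn-Artin structure recorded in the paragraph preceding Proposition \ref{1.8} gives an algebra isomorphism $\Soc(A) \cong M_{n_{1}}(\mathbb{C}) \oplus \cdots \oplus M_{n_{k}}(\mathbb{C})$. In particular $\Soc(A)$ carries a unit $e$, and for any sequence $(x_{n}) \subseteq \Soc(A)$ with $\|x_{n}\| = 1$ one has $e x_{n} = x_{n} = x_{n} e$, so neither $e x_{n}\to 0$ nor $x_{n} e \to 0$ is possible. Hence $e$ is not a TDZ in $\Soc(A)$, so not every socle-element is a TDZ.

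The only genuine obstacle is bookkeeping: making sure that Corollary \ref{1.4} can legitimately be applied to the (possibly incomplete) normed algebra $\Soc(A)$. This requires simultaneously that $\Soc(A)$ itself admits a two-sided approximate identity and that $\cl(\Soc(A))$ is non-unital. The first is Theorem \ref{1.7}, the second is precisely the contrapositive of Proposition \ref{1.8}, and once these are lined up the argument is a clean concatenation of Theorem \ref{1.7}, Proposition \ref{1.8}, and Corollary \ref{1.4}, with the finite-dimensional Wedderburn-Artin structure used only to produce the unit element in the converse direction.
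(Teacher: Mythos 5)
Your proof is correct and is precisely the intended unpacking of the paper's one-line argument, which likewise cites Theorem \ref{1.7}, Proposition \ref{1.8} and Corollary \ref{1.4} for the forward direction; your contrapositive for the converse (finite-dimensional socle has a unit by Wedderburn--Artin, and a unit is never a TDZ) is the implicit content of Proposition \ref{1.8} being stated as an equivalence. No gaps.
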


\begin{proof}
	This follows immediately from Theorem \ref{1.7}, Proposition \ref{1.8} and Corollary \ref{1.4}.
\end{proof}

A recent paper \cite{approxident} of H. G. Dales and A. \"{U}lger investigates (various notions of) approximate identities in function algebras. In Theorem~\ref{uniform} we show that every non-unital commutative Banach algebra $A$ with $A\not=Z(A)$ generates a function (uniform) algebra which does not possess an approximate identity.  We first need to establish Proposition~\ref{unitization}.\\

If $A$ is any Banach algebra, then denote by $A_\mathbf 1$ the \emph{standard unitization} of $A$. This turns $A$ into a unital Banach algebra via extension.

\begin{proposition}\label{unitization}
	Let $A$ be a non-unital Banach algebra, and let $a\in A$. Then $a$ is a TDZ in $A$ if and only if $a$ is a TDZ in $A_\mathbf 1$. 
\end{proposition}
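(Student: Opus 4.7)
My plan is to handle the two directions separately. The forward implication is a routine use of the canonical isometric embedding $A \hookrightarrow A_\mathbf{1}$, $x \mapsto (x,0)$: if $(x_n) \subseteq A$ satisfies $\|x_n\|=1$ and $ax_n \to 0$, then $(x_n,0) \in A_\mathbf{1}$ has $A_\mathbf{1}$-norm $1$ and $(a,0)(x_n,0) = (ax_n,0) \to 0$, so $a$ is a left TDZ in $A_\mathbf{1}$; the right case is analogous. No real work is needed here beyond bookkeeping.

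For the converse I would argue by contrapositive. Assume $a$ is not a TDZ in $A$; the standard left/right bounded-below characterization of being a TDZ then supplies constants $c_1,c_2>0$ with $\|ax\|\geq c_1\|x\|$ and $\|xa\|\geq c_2\|x\|$ for every $x\in A$. Suppose for contradiction that $a$ is a TDZ in $A_\mathbf{1}$, say a left TDZ (the right case is mirror). Then there exist $(x_n,\lambda_n) \in A_\mathbf{1}$ with $\|x_n\|+|\lambda_n|=1$ and $ax_n + \lambda_n a \to 0$, and I would dichotomize on $\lambda_n$. If $\lambda_{n_k}\to 0$ along a subsequence, then $\|x_{n_k}\|\to 1$ and $ax_{n_k}\to 0$, so the normalized sequence $x_{n_k}/\|x_{n_k}\|$ immediately contradicts $\|ax\|\geq c_1\|x\|$. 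Otherwise $|\lambda_n|\geq \delta>0$, and $u_n:=-x_n/\lambda_n$ is bounded with $au_n\to a$; since $a(u_n-u_m)\to 0$, the left bounded-below estimate forces $(u_n)$ to be Cauchy (else normalizing a subsequential difference $u_{n_k}-u_{m_k}$ with $\|u_{n_k}-u_{m_k}\|$ bounded below would exhibit $a$ as a left TDZ in $A$), and completeness of $A$ yields $u\in A$ with $au=a$.

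The main step is now to promote $u$ to a two-sided unit of $A$, contradicting the hypothesis of non-unitality. I would apply the bounded-below estimates three times in succession: $a(u^2-u)=(au)u-au=0$ together with the left estimate gives $u^2=u$; next, for any $b\in A$, $a(b-ub)=ab-(au)b=0$, so the left estimate gives $ub=b$, meaning $u$ is a left identity of $A$ (and in particular $ua=a$); finally, $(b-bu)a=ba-b(ua)=0$, so the right estimate gives $bu=b$, making $u$ a right identity as well. Hence $u$ is a two-sided unit of $A$, contradicting that $A$ is non-unital. The step I expect to require the most care is precisely this final promotion: it is the unique point at which both the left- and right-bounded-below estimates must be deployed (a left identity need not be a right identity in general), and it is also where completeness of $A$ is essential, since without it we would only obtain $u$ in $\cl(A)$ and be unable to locate the forbidden unit inside $A$ itself.
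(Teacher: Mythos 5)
Your proof is correct and follows essentially the same route as the paper's: split on whether the scalar parts $\lambda_n$ accumulate at $0$, otherwise form $u_n=-x_n/\lambda_n$, show the sequence is Cauchy, and derive a contradiction from the limit $u$ acting as an identity. The only (cosmetic) difference is in the endgame: you run the argument contrapositively with explicit bounded-below constants and show $u$ is a genuine two-sided unit of $A$, whereas the paper keeps the direct framing and instead picks an $x$ with $ux\neq x$ or $xu\neq x$ to exhibit $a$ as a divisor of zero; both yield the same contradiction with non-unitality.
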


\begin{proof}
	The forward implication is obvious. For the reverse implication assume $a$ is a TDZ in $A_{\mathbf 1}$. Then we can find, without loss of generality, a sequence $(z_n)$ in $A$, and a sequence of complex numbers $(\lambda_n)$ such that 
	$$\|z_n\|+|\lambda_n|=1\mbox{ for all } n\in\mathbb N\mbox{ and, }\lim a(z_n+\lambda_n\mathbf 1)=0.$$
	So, the sequence $(\lambda_n)$ is bounded, and, by the Bolzano-Weierstrass Principle, we can assume without loss of generality that it converges. Say $\lim\lambda_n=\lambda$. Thus $\lim az_n=-\lambda a$. If $\lambda=0$, then $\lim az_n/\|z_n\|=0$, and the proof is complete. Suppose $\lambda\not=0$. Then there is a sequence, say $(y_n)$, in $A$ such that $\lim ay_n=a$. If $(y_n)$ does not converge then it is not Cauchy in $A$, and similar to the argument used in the proof of Theorem~\ref{appunitstdz} we may conclude that $a$ is a TDZ in $A$. Assume therefore that $(y_n)$ converges; if $(y_n)$ has limit, say $y\in A$, then $a(y^2-y)=0$ from which it follows (as in the proof of Theorem~\ref{appunitstdz}) that $a$ is a TDZ in $A$, unless $y$ is an idempotent of $A$ commuting with $a$. But if the latter case prevails then, since $A$ is non-unital, and $y\in A$, we can again argue as in the proof of Theorem~\ref{appunitstdz} to conclude that $a$ is a TDZ in $A$. 
\end{proof}

For a commutative Banach algebra $A$ we denote by $x\mapsto\hat x$ the Gelfand transform of $A$, and write $\hat A:=\{\hat a:a\in A\}$. $\hat A$ is a normed algebra under the spectral radius $\rho_A(\cdot)$ for elements of $A$. That is, if $\hat a\in\hat A$, then 
$\|\hat a\|=\rho_A(a)$ defines a (possibly incomplete) norm on $\hat A$. 

\begin{theorem}\label{uniform}
	Let $A$ be a non-unital commutative Banach algebra. If some $a\in A$ is not a TDZ in $A$ then $\cl(\hat A)$ does not have an approximate identity.
\end{theorem}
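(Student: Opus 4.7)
The plan is to argue by contraposition: assume $\cl(\hat A)$ admits an approximate identity $(e_\lambda)_{\lambda\in\Lambda}$, fix $a\in A$ that is not a TDZ in $A$, and derive a contradiction.

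Step one is to replace the approximate identity in $\cl(\hat A)$ by a sequence drawn from $\hat A$ itself. Since $e_\lambda\hat a\to\hat a$ in $\cl(\hat A)$ and $\hat A$ is dense there, a routine triangle-inequality argument produces a sequence $(v_n)\subseteq A$ with
\[
\rho_A(v_na-a)=\|\hat v_n\hat a-\hat a\|_{\cl(\hat A)}\longrightarrow 0.
\]

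Step two is to bring the non-TDZ hypothesis into play through the unitization $A_{\mathbf 1}$. By Proposition~\ref{unitization}, $a$ is not a TDZ in $A_{\mathbf 1}$, so there is a constant $c>0$ with $\|ay\|_{A_{\mathbf 1}}\geq c\|y\|_{A_{\mathbf 1}}$ for every $y\in A_{\mathbf 1}$. Iterating this inequality (applying it in turn with $z=a^{n-1}y^n,\,a^{n-2}y^n,\dots,y^n$) and using commutativity of $A$ gives
\[
\|(ay)^n\|_{A_{\mathbf 1}}=\|a^ny^n\|_{A_{\mathbf 1}}\geq c^n\|y^n\|_{A_{\mathbf 1}};
\]
taking $n$-th roots and passing to the limit delivers the key spectral-radius estimate
\[
\rho_{A_{\mathbf 1}}(ay)\ \geq\ c\,\rho_{A_{\mathbf 1}}(y)\qquad(y\in A_{\mathbf 1}).
\]

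Step three is to apply this with $y:=v_n-\mathbf 1\in A_{\mathbf 1}$. On the left, $a(v_n-\mathbf 1)=av_n-a\in A$, so $\rho_{A_{\mathbf 1}}(a(v_n-\mathbf 1))=\rho_A(av_n-a)\to 0$ by step one. On the right, the character $\chi_\infty$ on $A_{\mathbf 1}$ that kills $A$ and sends $\mathbf 1$ to $1$ satisfies $\chi_\infty(v_n-\mathbf 1)=-1$, forcing $\rho_{A_{\mathbf 1}}(v_n-\mathbf 1)\geq 1$. Combining, $0\leftarrow\rho_A(av_n-a)\geq c>0$, the required contradiction.

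The conceptual heart of the argument is the spectral-radius inequality $\rho_{A_{\mathbf 1}}(ay)\geq c\,\rho_{A_{\mathbf 1}}(y)$, which is what allows the norm-level non-TDZ hypothesis to interact with the spectral-radius completion $\cl(\hat A)$; step one is a routine density argument, and step three is a one-line application of $\chi_\infty$. I foresee no significant obstacle.
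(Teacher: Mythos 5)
Your proof is correct, but it takes a genuinely different route from the paper's. Both arguments begin by invoking Proposition~\ref{unitization} to pass from ``$a$ is not a TDZ in $A$'' to ``$a$ is not a TDZ in $A_{\mathbf 1}$''. From there the paper cites Arens' theorem (a non-TDZ element of a commutative unital Banach algebra is invertible in some Banach superalgebra) to conclude that $\hat a$ is not a TDZ with respect to the spectral-radius norm, and then closes by noting that $\hat a$ fails to be a TDZ in $\cl(\hat A)$, so that Theorem~\ref{1.2}/Corollary~\ref{1.4} forbids an approximate identity. You replace Arens' theorem with the elementary chain $\left\|(ay)^n\right\|=\left\|a^ny^n\right\|\geq c^n\left\|y^n\right\|$ together with Gelfand's formula, which yields $\rho_{A_{\mathbf 1}}(ay)\geq c\,\rho_{A_{\mathbf 1}}(y)$ --- exactly the intermediate fact the paper extracts from Arens --- and then you run the approximate-identity contradiction by hand, using the character $\chi_\infty$ that annihilates $A$ to force $\rho_{A_{\mathbf 1}}(v_n-\mathbf 1)\geq 1$. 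The trade-off: the paper's proof is shorter given the quoted machinery and ties the theorem back to its main results, while yours is self-contained and elementary; it also has the incidental advantage that the same computation disposes of the case where $\cl(\hat A)$ happens to be unital (the constant net at the unit being an approximate identity), a case the paper's appeal to Theorem~\ref{1.2} leaves implicit. All the individual steps check out: the density argument producing $(v_n)$, the identification $\rho_{A_{\mathbf 1}}=\rho_A$ on $A$, and the fact that ``not a TDZ'' in a commutative algebra gives a uniform lower bound $\left\|ay\right\|\geq c\left\|y\right\|$.
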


\begin{proof}
	If $a$ is not a TDZ in $A$, then, by Proposition~\ref{unitization}, $a$ is not a TDZ in $A_\mathbf{1}$. A result of Arens' (see for instance \cite[p. 48]{larsen1973banach}) then says that $a$ is invertible in some Banach superalgebra, say $B$, of $A_\mathbf{1}$. Thus $\hat a$ cannot be a TDZ in
	$\hat A_\mathbf{1}$. Observe that, since  $\hat A_\mathbf{1}\cong( \hat A )_\mathbf 1$, $\hat a$ is not a TDZ in $\hat A$, and thus also not a TDZ in $\cl(\hat A)$. So $\cl(\hat A)$ cannot have an approximate identity. 
\end{proof}

\bibliographystyle{amsplain}
\bibliography{Spectral}

\providecommand{\bysame}{\leavevmode\hbox to3em{\hrulefill}\thinspace}
\providecommand{\MR}{\relax\ifhmode\unskip\space\fi MR }
\providecommand{\MRhref}[2]{%
  \href{http://www.ams.org/mathscinet-getitem?mr=#1}{#2}
}
\providecommand{\href}[2]{#2}
\begin{thebibliography}{10}

\bibitem{arveson2002short}
W.~Arveson, \emph{A {S}hort {C}ourse on {S}pectral {T}heory}, Graduate Texts in
  Mathematics, U.S. Government Printing Office, 2002.

\bibitem{spectrumpreservers}
B.~Aupetit, \emph{Spectrum {P}reserving {L}inear {M}appings between {B}anach
  {A}lgebras or {J}ordan {B}anach {A}lgebras}, J. London Math. Soc. \textbf{62}
  (2000), 917--924.

\bibitem{tdz}
S.~J. Bhatt and H.~V. Dedania, \emph{Banach {A}lgebras in which every element
  is a {T}opological {D}ivisor of {Z}ero}, Proc. Amer. Math. Soc. \textbf{123}
  (1995), 735--737.

\bibitem{ultraprimeexa}
M.~Cabrera and J.~Mart\'{i}nez, \emph{Inner {D}erivations on {U}ltraprime
  {N}ormed {A}lgebras}, Proc. Amer. Math. Soc. \textbf{125} (1997), 2033--2039.

\bibitem{dales2000banach}
H.~G. Dales, \emph{Banach {A}lgebras and {A}utomatic {C}ontinuity}, London
  Mathematical Society Monographs 24, Claredon Press, Oxford, 2000.

\bibitem{approxident}
H.~G. Dales and A.~Ulger, \emph{Approximate {I}dentities in {B}anach {F}unction
  {A}lgebras}, Studia {M}ath. \textbf{226} (2015), 155--187.

\bibitem{unbapproxid}
P.~G. Dixon, \emph{Unbounded {A}pproximate {I}dentities in {N}ormed
  {A}lgebras}, Glasgow Math. J. \textbf{34} (1992), 189--192.

\bibitem{katznelson2004introduction}
Y.~Katznelson, \emph{An introduction to harmonic analysis}, Cambridge
  Mathematical Library, Cambridge University Press, 2004.

\bibitem{larsen1973banach}
R.~Larsen, \emph{Banach algebras: An introduction}, Pure and Applied
  Mathematics Series, Marcel Dekker Incorporated, 1973.

\bibitem{idintensorproducts}
R.~J. Loy, \emph{Identities in {T}ensor {P}roducts of {B}anach {A}lgebras},
  Bull. Aust. Math. Soc. \textbf{2} (1970), 253--260.

\bibitem{rickart1960general}
C.E. Rickart, \emph{General {T}heory of {B}anach {A}lgebras}, University series
  in higher mathematics, Van Nostrand, 1960.

\bibitem{commutatorsinsocle}
F.~Schulz and R.~Brits, \emph{Commutators, {C}ommutativity and {D}imension in
  the {S}ocle of a {B}anach {A}lgebra: {A} generalized {W}edderburn-{A}rtin and
  {S}hoda's {T}heorem}, Linear {A}lgebra {A}ppl. \textbf{484} (2015), 175--198.

\bibitem{tracesocleident}
F.~Schulz, R.~Brits, and G.~Braatvedt, \emph{Trace {C}haracterizations and
  {S}ocle {I}dentifications in {B}anach {A}lgebras}, Linear {A}lgebra {A}ppl.
  \textbf{{472}} (2015), 151--166.

\bibitem{boundappunits}
J.~Wichmann, \emph{Bounded {A}pproximate {U}nits and {B}ounded {A}pproximate
  {I}dentities}, Proc. Amer. Math. Soc. \textbf{41} (1973), 547--550.

\bibitem{nonremovableideals}
W.~\.Zelazko, \emph{On a certain class of {N}on-removable {I}deals in {B}anach
  {A}lgebras}, Stud. Math. \textbf{44} (1972), 87--92.

\end{thebibliography}

\end{document}